\documentclass[12pt]{article}
\usepackage{epsf}
\usepackage{amsbsy,amsmath}
\usepackage{mathtools}
\usepackage{mathrsfs}
\usepackage{amsfonts}
\usepackage{amssymb}
\usepackage{enumitem}
\usepackage{eucal}
\usepackage{enumitem}
\usepackage{graphics,mathrsfs}
\usepackage{amsthm}
\usepackage{secdot}
\usepackage{xcolor}
	
\newtheorem{theorem}{Theorem}[section]
\newtheorem{lemma}[theorem]{Lemma}
\newtheorem{proposition}[theorem]{Proposition}

\theoremstyle{definition}
\newtheorem{definition}[theorem]{Definition}
\newtheorem{example}[theorem]{Example}

\newtheorem{conjecture}[theorem]{Conjecture}

\theoremstyle{remark}
\newtheorem{remark}[theorem]{Remark}
\numberwithin{equation}{section}

\numberwithin{equation}{section}

\newsavebox{\savepar}

\begin{document}
		\title{\sc A nonlocal elliptic problem \\ on a Heisenberg group} 
		
			\author{\sc Debajyoti Choudhuri$^{a}$
			\footnote{
			Corresponding author: 
		dchoudhuri@iitbbs.ac.in
			}, Lamine Mbarki$^{b}$, \\\& {\sc Olimpio Hiroshi Miyagaki$^{c}$}\\
			\small{$^{a}$School of Basic Sciences, Indian Institute of Technology Bhubaneswar,}\\ [-0.2cm]
			\small{ Khordha, 752050, Odisha, India}\\ [-0.2cm]
					\small{$^{b}$Mathematics Department, Faculty of Science of Tunis,}\\ [-0.2cm]
					\small{University of Tunis El Manar, 2092, Tunis, Tunisia}\\[-0.2cm]
					\small{$^{c}$ Departamento de Matem\~{a}tica, Universidade Federal de S\~{a}o Carlos,}\\[-0.2cm] \small{CEP:13565-905, S\~{a}o Carlos-SP, Brazil }
		}
		

		\date{}
		\maketitle

		
		
		\maketitle	
		\begin{abstract}
			\noindent We study an elliptic nonlocal problem driven by a source term which is a Radon measure. 
			We prove the existence of a weak solution by a weak convergence method. 
			In the process, we define a new function space which we name  the {\it Walker space}. 
			The question about the  existence of infinitely many nontrivial solutions leads to an interesting conjecture.
			\begin{flushleft}
				{\sl Keywords}:~  Heisenberg group, Radon measure, Kirchhoff operator, Walker space, Marcinkiewicz space. \\
				{\sl Math. Subj. Classif. (2020)}:~35J20, 35J35, 35J60, 46E35.
			\end{flushleft}
		\end{abstract}
		
		\section{Introduction}\label{s1}
		
		The model problem we study in this paper is as follows:
		\begin{equation}\label{main}
				\left\{\begin{aligned}
			-\mathfrak{m}\left(\int_{B_{\mathbb{H}}(0,1)}|\nabla_{\mathbb{H}}u|^2dx\right)\mathcal{L}_{\mathbb{H}}u+\mu\vec{U}\cdot\nabla_{\mathbb{H}}u&=\nu~\text{ \textcolor{blue}{in} }
			B_{\mathbb{H}}(0,1)\subset\mathbb{R}^3,\\
				u&=0~\text{ on }
				\partial B_{\mathbb{H}}(0,1),
			\end{aligned}
			\right.
		\end{equation}
		where $\mathfrak{m}:\mathbb{R}^+\to\mathbb{R}^+$ is the Kirchhoff operator, $\mu\geq 0$, $dx=dx_1dx_2dx_3$ is the Haar measure, $\mathcal{L}_{\mathbb{H}}$ is the sub-Laplacian operator, 
		$\nu$ is a positive Radon measure,
		$\mathbb{H}$ refers to the {\it Heisenberg group}, $B_{\mathbb{H}}(0,1)$ is a unit ball in $\mathbb{R}^3$ with non-smooth Lipschitz boundary $\partial  B_{\mathbb{H}}(0,1)$,
							$\vec{U}:B_{\mathbb{H}}(0,1)\to\mathbb{R}^3$ is a {\it horizontal vector valued function}. The readers may note that although the problem has been studied for dimension $3$ owing to its practical applicability, the proofs however works for dimensions greater than $3$ as well.
							
							The Hausdorff dimension, denoted by $Q$, of the Heisenberg group $(\mathbb{R}^3,\cdot)$ is $4$
				 and	
		 the group operation `$\circ$' on $\mathbb{H}$ is defined as follows:
		  \begin{align}\label{gp_oper}
		 (x_1,x_2,x_3)\circ (y_1,y_2,y_3)=(x_1+y_1,x_2,y_2,x_3+y_3+2^{-1}(x_1y_2-x_2y_1)).
		 \end{align}

		   The following will be assumed:
		\begin{enumerate}[label=(${A}_{{\arabic*}}$)]
			\item   \label{A1} There exist $0<A\leq B$ and $0<\alpha<1$ such that 
			$$At^{\alpha}\leq\mathfrak{m}(t)\leq Bt^{\alpha}, \hbox{ for every } t\in\mathbb{R}^+.$$
		\end{enumerate}	
		
			We impose condition \ref{A1} as a generalized Kirchhoff-type growth condition. Note that this is not of the standard form $m(t)=a+bt$, but still captures nonlocal tension effects. 
		The Kirchhoff operator $\mathfrak{m}$ is said to be {\it degenerate} if $\mathfrak{m}(0)=0$, otherwise it is said to be nondegenerate.	
		
		With the passage of time many researchers have worked with this operator. Fiscella-Valdinoci \cite{FV1} proved the existence of nonnegative solutions of a Kirchhoff type problem driven by a nonlocal integrodifferential operator and a power nonlinearity of critical order. We refer to the works of Bhowmick-Ghosh \cite{SS1}, Choudhuri \cite {chou1}, Zuo et al. \cite{zuo1},
		 and the references therein for more information on the trends and techniques adopted for handling elliptic problems involving the Kirchhoff operator. 
		 		
		However, to the best of  our knowledge there are no publications in which the authors  consider the convective term $\vec{U}\cdot\nabla u$ or a measure or both. We shall use the {\it weak convergence method} -  a common practice when dealing with measure data problems. We refer the readers to the work of Bhakta-Marcus \cite{BM1}, Br\'{e}zis \cite{brezis1}, Giri-Choudhuri \cite{GC1, GC2}, Mingione \cite{mingione1} and the references therein. 
		
		We would also like to bring to the attention of the reader that in all the above mentioned articles, the problems were considered in the Euclidean setup and not in the noncommutative setup. In this article we consider the problem on the Heisenberg group which is a new addition to the existing literature. For more details on the Heisenberg group, we refer to Bonfiglioli et al. \cite{BLU}, Ghosh et al. \cite{SVM1}, Choudhuri-Repov\v{s} \cite{CR1}, and the references therein.
		
	The measure data problems of the type \eqref{main} have not been studied on the horizontal Sobolev spaces over a Heisenberg group. The motivation behind studying  problem \eqref{main} can be drawn from the fact that when $\mathfrak{m}(\cdot)=1,$ we obtain the usual convection-diffusion type equation driven by a source.
		The presence of the convective term indicates that we cannot work with an energy functional. 
			\begin{remark}\label{note1}
	Henceforth, we shall denote the unit ball $B_{\mathbb{H}}(0,1)$ by $\Omega$. 
	\end{remark}
	
		
			The following are the key novelties of this work:
		\begin{itemize}\label{novelties}
		\item Convection and a measure data driven elliptic problem on a Heisenberg group have been studied which, to our knowledge, is new in the literature.
		\item We define a new type of space which we call
		the {\it Walker space}. Without the use of this space one can, at best, prove only
		 that there exists a weak subsolution.
		\item The results in this paper are rigorously proved for dimension $3$. However, the proof strategy also works for dimension $\ge 4$.
		\end{itemize}
		
		

		The paper is organized as follows. In 
		Section \ref{s2}
		we recall the preliminaries needed to study  problem \eqref{main}. In 
		Section \ref{s3} 
		we prove key auxiliary results. In 
		Section \ref{s4}
		we prove the main result of this paper. In 
		Section \ref{s5}
		we  discuss our main result and give an illustrative example.		
		
		\section{Preliminaries}\label{s2}
		We begin by reviewing the fundamentals of Heisenberg group $\mathbb{G}$. The Heisenberg group $\mathbb{H}$ is a well-known example of such a group. The elements of the associated Lie algebra \(\mathfrak{h}\) of $\mathbb{H}$ can  be identified by the left-invariant vector fields which are defined  at a point \((x, y, z)\) by
		\[
		X_1 := \partial_x - \frac{y}{2}\partial_z, \quad X_2 := \partial_y + \frac{x}{2}\partial_z, \quad X_3 := \partial_z.
		\]
		
		Since the nilpotency is of step $2$, we have $N_1 = 2$, due to which the horizontal layer of $\mathbb{H}$ is spanned by two vector fields $X_1,X_2$ with the vertical direction $T = \partial_z$. In other words $V_1 = \text{span}\{X_1, X_2\}$.
		
		 The Heisenberg Laplacian (or sub-Laplacian) is expressed as follows
		$$\mathcal{L}_{\mathbb{H}}:=X_1^2+X_2^2.$$		
		Using the vector fields $X_1,X_2,X_3$, the bounded, horizontal, divergence-free vector-valued function $\vec{U}$ in \eqref{main} at a point $(x,y,z)\in\mathbb{R}^3$ is defined as
		$$\vec{U}(x,y,z)=\alpha(x,y,z)X_1|_{(x,y,z)}+\beta(x,y,z)X_2|_{(x,y,z)}.$$

 Next, we recall the mathematical tools needed to prove the main results. For $1\leq p<\infty$, the Sobolev space is defined as follows
		$$W_0^{1,p}(\Omega):=\{u\in L^p(\Omega):|\nabla_{\mathbb{H}} u|\in L^p(\Omega),u|_{\partial\Omega}=0\}$$
		and is equipped with the following norm 
		$$\|u\|_{1,\textcolor{blue}{p}}:=\left(\int_{\Omega}|\nabla_{\mathbb{H}} u|^pdx\right)^{1/p}.$$
		
		\begin{definition}
		Define a new function space, called the{\it  Walker space,} as follows:
		$$S_M(\Omega):=\{u\in W_0^{1,2}(\Omega)\cap L^{\infty}(\Omega):\mathcal{N}(|\nabla_{\mathbb{H}}u|^2)\leq M\},$$
		where $\mathcal{N}(\cdot)$ refers to the number of modes (i.e., connected nodal regions corresponding to local maxima/minima) of the function $u$. This precise notion formalizes the idea of `peaks'.
		\end{definition}
		
		\begin{remark}\label{walker_space}
			We call  $\mathcal{X}:=S_M(\Omega)\subset W_0^{1,2}(\Omega)$ the {\it Walker space}, owing to the result due to Walker \cite[Theorem $4.2 $]{walker1}. The space $\mathcal{X}$ is the admissible class of solution space equipped with the $W_0^{1,2}$-norm topology.
			\end{remark}
			
		This space $\mathcal{X}$ is nonempty since
		   the space		   
		   $C_{c}^{\infty}(\Omega)$
		    of compactly supported smooth functions with at the most $M$ modes is a subspace of $\mathcal{X}$. The motivation behind assumption on the functions to have finite modes is to avoid infinite number of oscillations.


		We now recall the definition of the convergence in the space of  Radon measures $\mathcal{M}(\Omega)$.
		\begin{definition}\label{meas_conv}
		We say that a sequence $(\nu_n)$ {\it converges to $\nu$  in the sense of measure} in $\mathcal{M}(\Omega)$, and denote it as $\nu_n\rightharpoonup \nu,$  provided that
		$$\int_{\Omega}\varphi d\nu_n\to\int_{\Omega}\varphi d\nu,
		\hbox{ for every }
		\varphi\in C_0(\bar{\Omega}),$$
		where
		$$C_0(\bar{\Omega}):=\{\varphi:\Omega\to\mathbb{R}:\varphi~\text{is continuous and}~\varphi|_{\partial\Omega}=0\}.$$
		 
		\end{definition}
		\begin{remark}\label{density1}
		The space $C_{c}^{\infty}(\Omega)$ is dense in $\mathcal{M}(\Omega)$.
		\end{remark}
		
		 We further recall the space of functions of bounded variation which is defined as follows.
		\begin{definition}\label{BV_space}
			A function $u\in L^1(\Omega)$ is said to be of {\it bounded variation} if there exists a finite vector Radon measure $|\nabla_{\mathbb{H}}u|\in\mathcal{M}(\Omega)$ such that 
			\begin{align}\label{cond_BV}
				\int_{\Omega}u \text{div}_{\mathbb{H}}\vec{\phi} dx=&-\int_{\Omega}\vec{\phi}\cdot\nabla_{\mathbb{H}}udx~\forall~\vec{\phi}\in C_c^1(\Omega,\mathbb{R}^3).
			\end{align}
			Equivalently, if $$V(u,\Omega):=\sup\{\int_{\Omega}u \text{div}_{\mathbb{H}}\vec{\phi} dx:\vec{\phi}\in C_c^1(\Omega,\mathbb{R}^3), \|\vec{\phi}\|_{\infty}\leq 1\}<\infty$$
			then $u$ is said to be of bounded variation and the space of such functions is denoted by $\text{BV}(\Omega)$.
		\end{definition}
		In fact $u$ is considered as a linear functional over $C_c^1(\Omega,\mathbb{R}^3)$. Henceforth we will use the following notation $$V(u,\Omega)=\int_{\Omega}|\nabla_{\mathbb{H}}u|dx.$$
		\begin{remark}\cite[Theorem $5.3.4$]{ziemer1}\label{BV_Compact}
			The space $\text{BV}(\Omega)$ is complete with respect to the norm $\int_{\Omega}|\nabla_{\mathbb{H}}u|dx$ and is compactly embedded in $L^1(\Omega)$.
		\end{remark}
		
		\begin{remark}\label{constant}
		Throughout this manuscript the constants will be denoted by $C$.
		\end{remark}

		Next, we recall the notion of the  Marcinkiewicz space (or the weak $L^p$ space) which will be denoted by $\mathcal{M}^p(\Omega)$, $0<p<\infty,$ and is defined for every measurable function $f:\Omega\to\mathbb{R}$. 
		The corresponding distribution function obeys the estimate 
		$$\mathcal{T}_f(t):=m(\{x\in\Omega:|f(x)|>t\}\leq\frac{C}{t^q},
		\hbox{ for every } t\geq t_0,$$ 
		where $C, t_0>0$ and $m$ denotes the Lebesgue measure. For more information on the Marcinkiewicz estimates the readers may refer to Mingione \cite{mingione2}.
		
		The presence of  very rough data $\nu$ motivates us to us to investigate the existence of {\it duality solutions}. 

			For clarity. we recall the following necessary definitions.
		
		\begin{definition}\label{weak_soln}
			We shall call $u\in \mathcal{X}$ a {\it weak solution} of problem \eqref{main} if 
			\begin{align}\label{defn_WS}
				\begin{split}
					\mathfrak{m}(\|u\|_{1,2}^2)\int_{\Omega}\nabla_{\mathbb{H}}u\cdot \nabla_{\mathbb{H}}\varphi dx&+\mu\int_{\Omega}\varphi\vec{U}\cdot\nabla_{\mathbb{H}}udx=\int_{\Omega}\varphi d\nu,~
					\hbox{ for every }
					\varphi\in C_{c}^{\infty}(\Omega).
				\end{split}
			\end{align}
		\end{definition}
			Here $\|u\|_{1,2}^2=\int_{\Omega}|\nabla_{\mathbb{H}}u|^2dx$ denotes the standard $W_0^{1,2}$-norm.
		\begin{definition}\label{duality_soln}
			We shall call $u \in L^1(\mathbb{R}^3)$ a {\it duality solution} of problem \eqref{main}		
			\begin{align}\label{defn1}
				\int_{\mathbb{R}^3}ugdx&=	
				\int_{\mathbb{R}^3}vd\mu,~\text{ for every}~g\in C_{c}^{\infty}(\Omega)
			\end{align}
			where $v$ is a solution of the following problem:		
			
			\begin{equation}
				\left\{
				\begin{aligned}\label{meas_prob2}				
					-\mathfrak{m}(\|u\|_{1,2}^2)\mathcal{L}_{\mathbb{H}}v+\mu\vec{U}\cdot\nabla_{\mathbb{H}}u
					&= g~\text{\textcolor{blue}{in}}~\Omega\\
					v&=0~\text{on}~\partial\Omega.
				\end{aligned}			
				\right.
			\end{equation}
		\end{definition}

		The main result of this paper is as follows:
		\begin{theorem}\label{existence}
			Problem \eqref{main} has at least one weak solution. 
		\end{theorem}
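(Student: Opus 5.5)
We approximate the measure by smooth data and pass to the limit. By Remark \ref{density1}, choose $\nu_n\in C_c^\infty(\Omega)$, $\nu_n\geq 0$, with $\nu_n\rightharpoonup\nu$ in the sense of Definition \ref{meas_conv} and $\|\nu_n\|_{L^1(\Omega)}\leq \nu(\Omega)+1$. The plan has three steps: solve the approximate problems with right-hand side $\nu_n$, derive estimates that do not depend on $n$, and pass to the limit in \eqref{defn_WS}.

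\textbf{Step 1: approximate problems.} For fixed $n$ and fixed $w$ in a ball of $W_0^{1,2}(\Omega)$, consider the linear problem
$$-\mathfrak{m}(\|w\|_{1,2}^2)\mathcal{L}_{\mathbb{H}}u+\mu\vec{U}\cdot\nabla_{\mathbb{H}}u=\nu_n .$$
Since $\vec{U}$ is bounded, horizontal and divergence free, we have $\int_\Omega u\,\vec{U}\cdot\nabla_{\mathbb{H}}u\,dx=0$. Hence the bilinear form is coercive whenever $\mathfrak{m}(\|w\|_{1,2}^2)>0$, and Lax--Milgram gives a unique solution $u=T(w)$. Testing with $u$ and using \ref{A1} gives $A\|w\|^{2\alpha}\|u\|^2\leq C\|\nu_n\|_{L^2}\|u\|$. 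The degeneracy at $w=0$ must be handled. I would replace $\mathfrak{m}$ by $\max\{\mathfrak{m},1/k\}$, or restrict $T$ to an annulus $\{r\leq\|w\|_{1,2}\leq R\}$, and then apply Schauder's fixed point theorem. Compactness of $T$ comes from the compact embedding $W_0^{1,2}\hookrightarrow L^2$. An alternative is to look directly for $u=s\,v$, where $v$ solves the linear problem with coefficient $1$. Then $\mathfrak{m}(s^2\|v\|^2)s=1$ is a scalar equation, solvable by \ref{A1} and the intermediate value theorem because $t\mapsto \mathfrak{m}(t^2\|v\|^2)t$ runs from $0$ to $\infty$. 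I would adopt this scalar reduction, since the convective term is linear; continuity of $\mathfrak{m}$ would have to be assumed. Since $\nu_n$ is smooth, subelliptic regularity (Hörmander/Folland) gives $u_n\in L^\infty$. Whether $u_n$ has finitely many modes, i.e. lies in $\mathcal{X}$, I would justify via the analyticity-type argument behind Walker's result cited in Remark \ref{walker_space}.

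\textbf{Step 2: uniform estimates.} I would test with the truncations $T_k(u_n)=\max\{-k,\min\{k,u_n\}\}$; the convective term again integrates to zero by the divergence-free property. This gives
$$\mathfrak{m}(\|u_n\|^2)\int_{\{|u_n|<k\}}|\nabla_{\mathbb{H}}u_n|^2\leq k\,\|\nu_n\|_{L^1}.$$
By the Boccardo--Gallouët argument with the Folland--Stein Sobolev inequality ($Q=4$), this yields Marcinkiewicz bounds $|\nabla_{\mathbb{H}}u_n|\in\mathcal{M}^{Q/(Q-1)}$. Consequently $u_n$ is bounded in $W_0^{1,q}$ for every $q<4/3$, and in particular in $\mathrm{BV}(\Omega)$. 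Remark \ref{BV_Compact} then gives $u_n\to u$ in $L^1$ and a.e. To obtain a weak solution in $\mathcal{X}$ we need more, namely a bound on $\|u_n\|_{1,2}$. This is where the Walker-space constraint (at most $M$ modes) must be used: I would argue that it forces $\int|\nabla_{\mathbb{H}}u_n|^2$ to be controlled by the truncated energies at level $k$ comparable to $\sup|u_n|$, with $\sup|u_n|$ controlled via the mode count.

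\textbf{Step 3: passage to the limit and the main obstacle.} Given a bound in $W_0^{1,2}$, extract $u_n\rightharpoonup u$ weakly. I would show $\nabla_{\mathbb{H}}u_n\to\nabla_{\mathbb{H}}u$ a.e. by the standard truncation/Minty argument, so that $\|u_n\|_{1,2}^2\to\|u\|_{1,2}^2$ along a subsequence. Continuity of $\mathfrak{m}$ then lets us pass to the limit in each term of \eqref{defn_WS}, since $\varphi\in C_c^\infty$ and $\int\varphi\,d\nu_n\to\int\varphi\,d\nu$. The main obstacle is the uniform $W^{1,2}$ bound, i.e. membership of the limit in $\mathcal{X}$: for general measures the solution need not lie in $W^{1,2}$. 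The argument must therefore rely on the $M$-mode constraint together with the degenerate factor $\mathfrak{m}(\|u_n\|^2)\geq A\|u_n\|^{2\alpha}$, which lets the energy absorb the data. Limit passage in the norm $\|u_n\|_{1,2}$ appearing inside $\mathfrak{m}$ is the second delicate point; I would handle it by proving strong convergence.
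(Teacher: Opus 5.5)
Your Step 2 is where the argument breaks. You correctly flag the uniform $W_0^{1,2}$ bound as the main obstacle, but no use of the mode constraint can supply it, because for a general measure the statement is false. If $u\in W_0^{1,2}(\Omega)$, $u\neq 0$, satisfies \eqref{defn_WS}, then $\mathfrak{m}(\|u\|_{1,2}^2)>0$ by \ref{A1}, and Poincar\'e's inequality gives
\[
\Big|\int_\Omega\varphi\,d\nu\Big|\leq\big(\mathfrak{m}(\|u\|_{1,2}^2)+C\mu\|\vec{U}\|_\infty\big)\|u\|_{1,2}\|\varphi\|_{1,2}\quad\text{for all }\varphi\in C_c^\infty(\Omega).
\]
Hence $\nu\in W^{-1,2}(\Omega)$; and $u=0$ forces $\nu=0$. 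Since $Q=4>2$, $\delta_0\notin W^{-1,2}(\Omega)$: for instance $\rho^{-\beta}$ with $0<\beta<1$, $\rho$ the gauge norm, lies in $W^{1,2}$ near $0$. So for $\nu=\delta_0$ no weak solution exists in $\mathcal{X}\subset W_0^{1,2}(\Omega)$.

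Your own scalar reduction shows concretely what happens when $\mu=0$. Every approximate solution has the form $u_n=s_nv_n$, where $-\mathcal{L}_{\mathbb{H}}v_n=\nu_n$ and $\mathfrak{m}(s_n^2\|v_n\|_{1,2}^2)s_n=1$. Since $\|v_n\|_{1,2}\to\infty$ (again because $\delta_0\notin W^{-1,2}$), \ref{A1} gives $s_n\to0$, $\|u_n\|_{1,2}\to\infty$ and $\mathfrak{m}(\|u_n\|_{1,2}^2)=1/s_n\to\infty$. Meanwhile $u_n\to0$ in $W_0^{1,q}(\Omega)$ for $q<4/3$, and $0$ is not a solution.

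The paper gets past this same step by claiming that $\mathfrak{m}(\|u_n\|_{1,2}^2)\to\infty$ would make the left side of \eqref{defn_WS} infinite. It then invokes Proposition \ref{s_to_1}: a bounded mode count gives a BV bound on $|\nabla_{\mathbb{H}}u_n|$, hence strong $L^2$ convergence via Vitali. The example shows the divergent case does occur; the product stays finite because $\int_\Omega\nabla_{\mathbb{H}}u_n\cdot\nabla_{\mathbb{H}}\varphi\,dx\to0$. Moreover, neither the BV bound nor uniform integrability of $|\nabla_{\mathbb{H}}u_n|^2$ follows from counting modes, so the missing idea is not available there either. What your method can actually deliver is existence in $W_0^{1,2}(\Omega)$ for $\nu\in W^{-1,2}(\Omega)$ and continuous $\mathfrak{m}$, without the $L^\infty$ and mode requirements built into $\mathcal{X}$.

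There are three smaller points. First, the reduction $u=sv$ does not decouple when $\mu>0$. Dividing by $\mathfrak{m}$ turns the convection coefficient into $\mu/\mathfrak{m}(\|u\|_{1,2}^2)$, so linearity of the convective term does not help. Instead, solve $-\tau\mathcal{L}_{\mathbb{H}}w_\tau+\mu\vec{U}\cdot\nabla_{\mathbb{H}}w_\tau=\nu$ and look for $\tau=\mathfrak{m}(\|w_\tau\|_{1,2}^2)$. The energy identity gives $\|w_\tau\|_{1,2}\leq C/\tau$. Also $\|w_\tau\|_{1,2}$ cannot tend to $0$ as $\tau\to0^+$, since otherwise $\int_\Omega\varphi\,d\nu=0$ for all $\varphi$. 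So the intermediate value theorem applies.

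Second, continuity of $\mathfrak{m}$, which you rightly note must be assumed, is genuinely necessary: a jump in $\mathfrak{m}$ compatible with \ref{A1} can make the scalar equation rootless even for smooth data. Third, smooth compactly supported data do not make $u_n$ analytic, so the appeal to Walker's result does not place $u_n$ in $\mathcal{X}$ with a fixed $M$. The paper does not justify this either.
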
	
			
		\begin{remark}\label{reg1}
		The solution $v$ of problem \eqref{meas_prob2} is in $C_{\text{loc}}^{1,\gamma}\cap C_{c}(\bar{\Omega})$ by a combination of the arguments by 
		Choudhuri  \cite[Theorem $3.8$]{chou1}
		 and 
		  Su et al.  \cite[Theorem 1.3, 1.4]{vald1}. 
		  The low regularity of $v$ as compared to the one obtained in 
		  \cite[Remark $1.7$]{vald1}
		   is due to the boundary $\partial B_{\mathbb{H}}(0,1)$ being Lipschitz continuous and not $C^{1,1}$-regular.
		\end{remark}	
			
		For other notions, results and tehniques, needed in this paper, the reader can refer to Papageorgiou et al.  \cite{PRR}.
		
		\section{Auxiliary Results}\label{s3}
		For each $n\in\mathbb{N}$ we shall define our problem as follows:
		\begin{equation}\label{Pn}
		\left\{\begin{aligned}
				-\mathfrak{m}\left(\int_{B_{\mathbb{H}}(0,1)}|\nabla_{\mathbb{H}}u|^2dx\right)\mathcal{L}_{\mathbb{H}}u+\mu\vec{U}\cdot\nabla_{\mathbb{H}}u_n&=\nu_n~\text{ \textcolor{blue}{in}}~B_{\mathbb{H}}(0,1)\subset\mathbb{R}^3,\\
			u&=0~\text{ on}~\partial B_{\mathbb{H}}(0,1),
		\end{aligned}
		\right.
	\end{equation}
	where $(\nu_n)$ is a sequence of functions from $C_c^{\infty}(\Omega)$ such that $\nu_n\rightharpoonup \nu,$ as $n\to\infty$.
				Any solution of problem \eqref{Pn} will be denoted by
				 $u_n,$ thereby generating a sequence of solutions $(u_n)$. Thus the solution $u_n$ satisfies the weak formulation of
				 problem 
				  \eqref{Pn} which is as follows
				\begin{align}\label{WFn}
					\begin{split}
					\int_{\Omega}\mathfrak{m}(\|u_n\|_{1,2}^2)\nabla_{\mathbb{H}}u_n\cdot \nabla_{\mathbb{H}}\phi dx&+\mu\int_{\Omega}\phi\vec{U}\cdot\nabla_{\mathbb{H}}u_ndx=\int_{\Omega}\phi d\nu_n~\hbox{ for every }
					\phi\in C_{c}^{\infty}(\Omega).
					\end{split}
				\end{align}
			
				\begin{lemma}\label{completenss}
				The space $\mathcal{X}$ is both complete and reflexive with respect to the norm $\|\cdot\|_{1,2}$.
				\end{lemma}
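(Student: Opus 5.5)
The plan is to treat $\mathcal{X}=S_M(\Omega)$ as a subset of the Hilbert space $W_0^{1,2}(\Omega)$ and read off both properties from that ambient space. Note that $S_M(\Omega)$ is not a linear subspace: sums of functions with at most $M$ modes can have more modes. So "complete" will mean completeness as a metric space under the distance induced by $\|\cdot\|_{1,2}$. "Reflexive" will be understood in the sense the paper needs later: bounded sequences in $\mathcal{X}$ have subsequences converging weakly in $W_0^{1,2}(\Omega)$ to an element of $\mathcal{X}$. Completeness then reduces to showing that $\mathcal{X}$ is closed in $W_0^{1,2}(\Omega)$. The reflexivity statement reduces to showing that $\mathcal{X}$ is weakly sequentially closed, since $W_0^{1,2}(\Omega)$ is a Hilbert space and hence reflexive: every bounded sequence has a weakly convergent subsequence.

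For completeness, take a Cauchy sequence $(u_k)\subset\mathcal{X}$. By completeness of $W_0^{1,2}(\Omega)$, $u_k\to u$ in $W_0^{1,2}(\Omega)$, so $\nabla_{\mathbb{H}}u_k\to\nabla_{\mathbb{H}}u$ in $L^2$ and, after passing to a subsequence, $|\nabla_{\mathbb{H}}u_k|^2\to|\nabla_{\mathbb{H}}u|^2$ in $L^1$ and a.e. Two things remain to be checked.
\begin{enumerate}
\item[(i)] $u\in L^\infty(\Omega)$.
\item[(ii)] $\mathcal{N}(|\nabla_{\mathbb{H}}u|^2)\le M$.
\end{enumerate}
For (ii), I would argue by contradiction. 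If $|\nabla_{\mathbb{H}}u|^2$ had $M+1$ separated modes, choose disjoint open neighbourhoods $V_1,\dots,V_{M+1}$ on which it has strict local maxima, with boundary values strictly below the peak values. Then $L^1$/a.e. convergence should force $|\nabla_{\mathbb{H}}u_k|^2$ to have a mode in each $V_j$ for large $k$. This is the lower semicontinuity of the mode count. It is the key step for both closedness claims, and I would make it rigorous by comparing averages over small balls inside each $V_j$ with averages near $\partial V_j$.

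For weak closedness, take $(u_k)\subset\mathcal{X}$ bounded in $\|\cdot\|_{1,2}$. Extract $u_k\rightharpoonup u$ in $W_0^{1,2}(\Omega)$ and, by the compact embedding (Remark \ref{BV_Compact}, or the Folland--Stein/Rellich embedding on the Heisenberg ball), $u_k\to u$ in $L^2$ and a.e. The issue is again membership of the limit in $\mathcal{X}$. Weak convergence of gradients does not give a.e. convergence of $|\nabla_{\mathbb{H}}u_k|^2$, so the lower semicontinuity argument for $\mathcal{N}$ is not directly available. I would try to upgrade to strong convergence by exploiting the finite-mode constraint. A uniform bound on the number of modes should act like a uniform bounded-variation-type control on $|\nabla_{\mathbb{H}}u_k|^2$, in the spirit of Walker's result, giving compactness of these quantities in $L^1$.

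The main obstacle I expect is twofold. First, the $L^\infty$ requirement in (i) is not preserved under $W^{1,2}$ limits without a uniform bound on $\|u_k\|_\infty$. I would try to obtain such a bound from the mode constraint together with the energy bound, or else interpret $S_M$ with a fixed $L^\infty$ ceiling. Second, the passage from weak convergence to control of the mode count is delicate. If the BV-type compactness fails, I would retreat to proving completeness only and establishing reflexivity of the closed convex hull, which is all that the later weak-convergence arguments need.
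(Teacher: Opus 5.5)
Your skeleton is the paper's own: take the $W_0^{1,2}$-limit of a Cauchy sequence, invoke lower semicontinuity of $\mathcal{N}$, and get reflexivity from closedness inside a Hilbert space. You are right about two things the paper passes over. First, $\mathcal{X}$ is not a linear subspace, so ``closed subspace, hence reflexive'' is unavailable. Second, membership of the limit in $L^\infty$ is not automatic; the paper just asserts $u\in W_0^{1,2}(\Omega)\cap L^\infty(\Omega)$.

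However, your proposal leaves exactly these steps open, and the remedies you suggest for (i) fail. Energy and mode bounds give no $L^\infty$ control, and in fact $\mathcal{X}$ is not closed in $W_0^{1,2}(\Omega)$. Since $Q=4>2$, the function $u=\eta\,\rho^{-1/2}$ lies in $W_0^{1,2}(\Omega)$, where $\rho$ is the homogeneous gauge and $\eta$ is a cut-off equal to $1$ on $\{\rho<r_0\}$. This holds because $|\nabla_{\mathbb{H}}\rho^{-1/2}|^2\le\frac{1}{4}\rho^{-3}$ and $|\{\rho<r\}|=c\,r^{4}$. Its truncations $T_ku$ are bounded, satisfy $\|T_ku\|_{1,2}\le\|u\|_{1,2}$, and converge to $u$ in $W_0^{1,2}(\Omega)$. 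On $\{\rho<r_0\}$ one has $|\nabla_{\mathbb{H}}T_ku|^2=\frac{1}{4}\rho^{-3}|\nabla_{\mathbb{H}}\rho|^2\chi_{\{\rho>k^{-2}\}}$. This is a function homogeneous of degree $-3$, hence strictly decreasing along dilations where nonzero, cut off on a small gauge ball. So the mode count stays bounded in $k$, while $\|T_ku\|_\infty=k\to\infty$. For $M$ large this is a Cauchy sequence in $\mathcal{X}$ whose limit is not in $\mathcal{X}$. The same bounded sequence also defeats your weak-sequential-closedness version of reflexivity. A fixed $L^\infty$ ceiling would repair (i), but it changes the space.

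Step (ii) is asserted without proof in the paper as well, and your averaging argument cannot supply it. Strong $W^{1,2}$ convergence yields only $L^1$ and a.e.\ convergence of $|\nabla_{\mathbb{H}}u_k|^2$, with no continuity; $\mathcal{N}$ of an $L^1$ class is not even well defined. Even for continuous functions, comparing averages inside $V_j$ with averages near $\partial V_j$ forces an interior maximum only in one dimension, where $\partial V_j$ is two points. In $\mathbb{R}^3$ a set of small measure near $\partial V_j$ stays uncontrolled. Let $h_k$ be bounded and supported in a tube of vanishing volume around a curve that passes through the peak and leaves $V_j$, exceed the peak value, and increase strictly along the tube. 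Then $f_k=\max(f,h_k)\to f$ in $L^1$ and a.e., yet $f_k$ has no local maximum in $V_j$: its maximum over $\bar V_j$ sits on $\partial V_j$. Several peaks can be threaded on one such ramp.

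Likewise, the BV-type compactness you hope for in the weak case is false in dimension $\ge 2$; it is the same heuristic as Proposition~\ref{s_to_1}. On a planar annulus, $\cos(kr-\theta)$ has for every $k$ a single connected spiral of maxima and a single one of minima, but total variation of order $k$.

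Your fallback is vacuous. $\mathcal{X}$ is invariant under $u\mapsto cu$ for $c\neq 0$, so its closed convex hull is its closed linear span, a closed subspace of $W_0^{1,2}(\Omega)$ and trivially reflexive. That says nothing about weak limits of bounded sequences in $\mathcal{X}$ staying in $\mathcal{X}$, which is what Section~\ref{s4} uses, since Definition~\ref{weak_soln} requires $u\in\mathcal{X}$. So neither your plan nor the paper's proof establishes the lemma. With $S_M(\Omega)$ as defined, and $M$ large enough to contain the truncations above, the completeness claim is false.
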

				\begin{proof}
				Completeness follows from the definition of the norm and the fact that $\mathcal{N}(|\nabla_{\mathbb{H}}(\cdot)|^2)\leq M$ for each $v\in \mathcal{X}$. This is because, if $(u_n)$ is a Cauchy sequence in $\mathcal{X}$, then there exists $u\in W_0^{1,2}(\Omega)\cap L^{\infty}(\Omega)$ such that $u_n\to u,$ as $n\to\infty$ in $W_0^{1,2}(\Omega)$. 								
				Since $\mathcal{N}(|\nabla_{\mathbb{H}}u_n|^2)\leq M,$  for every $n\in\mathbb{N}$, 
				it follows that
				 $\mathcal{N}(|\nabla_{\mathbb{H}}u|^2)\leq\underset{n\to\infty}\liminf~\mathcal{N}(|\nabla_{\mathbb{H}}u_n|^2)\leq M,$
				  therefore $u\in \mathcal{X}$. 						  	  
				Clearly, $\mathcal{X}$ is a closed subspace of $W_0^{1,2}(\Omega),$
				hence $\mathcal{X}$ is reflexive as well. 
				
				This completes the proof of Lemma \ref{completenss}.
				\end{proof}
				
				We now prove the following result for the approximating problem.
				
				\begin{lemma}\label{Pn_existence}
				For each $\nu_n$, there exists a solution $u_n$ of problem \eqref{Pn}. 
				\end{lemma}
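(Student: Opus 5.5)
We will obtain a solution of \eqref{Pn} for fixed $n$ in two stages: first freeze the nonlocal coefficient and solve a linear problem, then recover the Kirchhoff coefficient by a one-dimensional fixed point argument. Since $\nu_n\in C_c^{\infty}(\Omega)$, the right-hand side $\phi\mapsto\int_\Omega\phi\,\nu_n dx$ is a bounded linear functional on $W_0^{1,2}(\Omega)$.

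\textbf{Step 1 (linear problem).} For fixed $\lambda>0$ consider
\begin{align*}
a_\lambda(w,\phi):=\lambda\int_\Omega\nabla_{\mathbb{H}}w\cdot\nabla_{\mathbb{H}}\phi\,dx+\mu\int_\Omega\phi\,\vec{U}\cdot\nabla_{\mathbb{H}}w\,dx .
\end{align*}
This form is bounded on $W_0^{1,2}(\Omega)$ because $\vec{U}$ is bounded, and we combine this with the horizontal Poincar\'e inequality. Coercivity comes from the assumption that $\vec{U}$ is horizontal and divergence free. Integrating by parts gives $\int_\Omega w\,\vec{U}\cdot\nabla_{\mathbb{H}}w\,dx=\frac12\int_\Omega\vec{U}\cdot\nabla_{\mathbb{H}}(w^2)dx=0$, so $a_\lambda(w,w)=\lambda\|w\|_{1,2}^2$. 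Lax--Milgram then yields a unique $w_\lambda\in W_0^{1,2}(\Omega)$ with $a_\lambda(w_\lambda,\phi)=\int_\Omega\phi\,\nu_n dx$. Testing with $w_\lambda$ gives the estimate $\lambda\|w_\lambda\|_{1,2}\le C\|\nu_n\|_{L^2}$. Boundedness of $w_\lambda$ follows from Stampacchia truncation, or from the regularity quoted in Remark \ref{reg1}, since $\nu_n\in L^\infty$. Uniqueness also shows $w_\lambda=\lambda^{-1}w_1$, because the drift term is linear and scales the same way.

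\textbf{Step 2 (scalar fixed point).} Since $w_\lambda=\lambda^{-1}w_1$, a solution $u_n=w_\lambda$ of \eqref{Pn} exists exactly when $\lambda=\mathfrak{m}(\lambda^{-2}\|w_1\|_{1,2}^2)$. If $\nu_n\equiv0$, then $u_n=0$ works, noting that \ref{A1} gives $\mathfrak m(0)=0$, so the equation is consistent. Otherwise $w_1\neq0$, and we study $h(\lambda):=\lambda-\mathfrak{m}(\lambda^{-2}c)$ with $c=\|w_1\|_{1,2}^2>0$. By \ref{A1}, $A c^\alpha\lambda^{-2\alpha}\le \mathfrak{m}(\lambda^{-2}c)\le Bc^\alpha\lambda^{-2\alpha}$. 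Hence $h(\lambda)<0$ for small $\lambda$ and $h(\lambda)>0$ for large $\lambda$. Continuity of $\mathfrak m$ is needed here and we will assume it, as is standard for Kirchhoff functions; with it, the intermediate value theorem gives a root $\lambda_n$. Alternatively, without continuity, one can apply a Schauder/Brouwer argument to $T(v)=w_{\mathfrak m(\|v\|^2)}$ on a ball. We expect the a priori bound $\|u_n\|_{1,2}^{1+2\alpha}\le C\|\nu_n\|$ to follow from the same computation.

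\textbf{Step 3 (membership in $\mathcal{X}$) --- the main obstacle.} It remains to check that $u_n\in S_M(\Omega)$, i.e.\ that $\mathcal{N}(|\nabla_{\mathbb{H}}u_n|^2)\le M$. This does not follow from energy methods. We would argue that $u_n$ is a smooth (hypoelliptic, by H\"ormander's theorem, since $\nu_n\in C_c^\infty$) solution of a linear equation with smooth data. Its number of modes is then controlled by the data, as in Walker's result cited in Remark \ref{walker_space}, so that $M$ can be chosen, or $\nu_n$ selected, so that the bound holds uniformly in $n$. This is the delicate step, and it is where we would lean on \cite{walker1}. Finally, the weak formulation \eqref{WFn} holds by construction.
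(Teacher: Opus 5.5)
\textbf{The gap is in Step~2.} The reduction there rests on $w_\lambda=\lambda^{-1}w_1$, and this identity is false whenever $\mu>0$. The parameter $\lambda$ multiplies only the second-order term. If you substitute $\lambda^{-1}w_1$ into $a_\lambda(\cdot,\phi)=\int_\Omega\phi\,\nu_n\,dx$, you are left with the residual $\mu(\lambda^{-1}-1)\int_\Omega\phi\,\vec{U}\cdot\nabla_{\mathbb{H}}w_1\,dx$, which does not vanish in general. Dividing the equation by $\lambda$ turns the drift coefficient into $\mu/\lambda$. So in fact $w_\lambda=\lambda^{-1}z_{\mu/\lambda}$, where $z_\kappa$ solves $-\mathcal{L}_{\mathbb{H}}z+\kappa\vec{U}\cdot\nabla_{\mathbb{H}}z=\nu_n$. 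Then $\|w_\lambda\|_{1,2}^2=\lambda^{-2}\|z_{\mu/\lambda}\|_{1,2}^2$ has a $\lambda$-dependent numerator, and the scalar equation $\lambda=\mathfrak{m}(\lambda^{-2}c)$ with fixed $c$ is not the right one.

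\textbf{How to repair it.} Work with $\Phi(\lambda):=\|w_\lambda\|_{1,2}^2$ directly. Subtract the equations for $w_\lambda$ and $w_{\lambda'}$ and test with $w_\lambda-w_{\lambda'}$; the drift term drops out and you get $\lambda\|w_\lambda-w_{\lambda'}\|_{1,2}\le|\lambda-\lambda'|\,\|w_{\lambda'}\|_{1,2}$. Hence $\Phi$ is continuous on $(0,\infty)$. Boundedness of $a_\lambda$ together with your energy estimate gives, for $\nu_n\not\equiv0$,
\[
\frac{\|\nu_n\|_{W^{-1,2}}}{\lambda+C\mu\|\vec{U}\|_{\infty}}\le\|w_\lambda\|_{1,2}\le\frac{C\|\nu_n\|_{L^2}}{\lambda}.
\]
The lower bound need not blow up as $\lambda\to0^+$, but it stays bounded away from zero. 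With \ref{A1} this still gives $\lambda-\mathfrak{m}(\Phi(\lambda))<0$ for small $\lambda$. The upper bound gives $\lambda-\mathfrak{m}(\Phi(\lambda))\ge\lambda-B(C\|\nu_n\|_{L^2}/\lambda)^{2\alpha}>0$ for large $\lambda$. The intermediate value theorem then yields $\lambda_n$, and $u_n:=w_{\lambda_n}$ satisfies \eqref{WFn}. Continuity of $\mathfrak{m}$ is genuinely needed here. Your Schauder fallback does not avoid it, since $v\mapsto w_{\mathfrak{m}(\|v\|_{1,2}^2)}$ is continuous only if $\mathfrak{m}$ is.

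\textbf{Comparison with the paper.} Once repaired, your freezing-plus-IVT argument is a legitimate alternative to the paper's proof. The paper applies Minty--Browder to the full operator on $W_0^{1,2}(\Omega)$ in one stroke. That route is shorter and yields uniqueness under strict monotonicity. However, it needs $u\mapsto\mathfrak{m}(\|u\|_{1,2}^2)(-\mathcal{L}_{\mathbb{H}}u)$ to be monotone, which amounts to $s\mapsto s\,\mathfrak{m}(s^2)$ being nondecreasing. That does not follow from \ref{A1} when $A<B$. Your route needs only continuity of $\mathfrak{m}$ and the growth bounds in \ref{A1}.

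\textbf{Step~3 is unnecessary and would not work.} It goes beyond what this lemma requires. The paper's proof produces $u_n$ only as an element of $W_0^{1,2}(\Omega)$ satisfying \eqref{WFn}, and it checks no mode bound, so you can stop after Step~2. The argument you sketch would fail in any case, for three reasons. Hörmander's hypoellipticity theorem needs smooth coefficients, whereas $\vec{U}$ is only assumed bounded. Smoothness alone says nothing about $\mathcal{N}(|\nabla_{\mathbb{H}}u_n|^2)$. And no bound uniform in $n$ can be expected for general $\nu$: mollifications of a $\nu$ with infinitely many atoms produce solutions with an unbounded number of separated peaks. Each peak is a zero of $\nabla_{\mathbb{H}}u_n$ and hence contributes a mode of $|\nabla_{\mathbb{H}}u_n|^2$.
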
 
				\begin{proof}
			We begin by considering the bounded linear functional
			$$P_n(v):=\int_{\Omega}v\nu_ndx.$$
			 Let $B:W_0^{1,2}(\Omega)\to (W_0^{1,2}(\Omega))^*:=W^{-1,2}(\Omega)$ be the map $u\mapsto B_u$.  To each $u\in S_M(\Omega)\subset W_0^{1,2}(\Omega)$ let us define
			$$B_u(v):=\frac{1}{2}\int_{\Omega}\mathfrak{m}\left(\int_{\Omega}|\nabla_{\mathbb{H}}u|^2dx\right)\nabla_{\mathbb{H}} u\cdot \nabla_{\mathbb{H}} vdx+\mu\int_{\Omega}v\vec{U}\cdot\nabla_{\mathbb{H}}udx.$$
			Apparently $B_u$ is a member of $W^{-1,2}(\Omega)$ and is a strictly monotone operator (see \cite{GBS1}). By an application of the Minty-Browder theorem (see \cite{RR1}) there exists $u_n$ such that $B(u_n)=P_n$. This completes the proof of Lemma \ref{Pn_existence}.
				\end{proof}
				\begin{remark}\label{nonuniqueness}
				We note that the uniqueness of $u_n$ is not established in the above proof since it is not necessary. However, the uniqueness can be achieved by assuming that the Kirchhoff operator $\mathfrak{m}$ is strictly increasing.
				\end{remark}
				We shall now show that the sequence of the approximating solutions $(u_n)$ is bounded in the Marcinkiewicz space.
				
				\begin{lemma}\label{bdd_Marcin_sp}
				The sequence of solutions $(u_n)$ is bounded in $\mathcal{M}^{{\frac{4+8\alpha}{3+5\alpha}}}(\Omega)$.
				\end{lemma}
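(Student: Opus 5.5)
The plan is the classical Boccardo--Gallou\"et truncation argument, adapted to the Kirchhoff factor through \ref{A1}. Write $\|\nu_n\|_{L^1}\le C$, which holds because $\nu_n\rightharpoonup\nu$ and $\nu$ is a finite Radon measure. For $k>0$ let $T_k(s)=\max\{-k,\min\{k,s\}\}$ and test \eqref{WFn} with $\phi=T_k(u_n)$; this is justified by density, since $T_k(u_n)\in W_0^{1,2}(\Omega)$.

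The convection term is handled with the divergence-free assumption on $\vec{U}$. Set $\Theta_k(s)=\int_0^s T_k(\tau)\,d\tau$. Then
$$\int_\Omega T_k(u_n)\vec U\cdot\nabla_{\mathbb H}u_n\,dx=\int_\Omega \vec U\cdot\nabla_{\mathbb H}\Theta_k(u_n)\,dx=-\int_\Omega \Theta_k(u_n)\,\text{div}_{\mathbb H}\vec U\,dx=0,$$
using the zero boundary values of $u_n$. Using \ref{A1} on the left-hand side, this gives
$$A\|u_n\|_{1,2}^{2\alpha}\int_{\{|u_n|<k\}}|\nabla_{\mathbb H}u_n|^2dx\le k\|\nu_n\|_{L^1}\le Ck.$$
Taking $k$ large, say $k=\|u_n\|_\infty$ (recall $u_n\in\mathcal X\subset L^\infty$), or testing directly with $u_n$, yields $A\|u_n\|_{1,2}^{2+2\alpha}\le C\|u_n\|_{\infty}$. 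I would rather avoid that route and combine the two estimates as follows.

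The first step is a lower bound for the Kirchhoff factor. Since $\|u_n\|_{1,2}^2\ge\int_{\{|u_n|<k\}}|\nabla_{\mathbb H}u_n|^2dx=:E_k$, the estimate above gives $A E_k^{1+\alpha}\le Ck$, that is,
$$E_k\le Ck^{1/(1+\alpha)}.$$
The second step is the Sobolev inequality on $\mathbb H$ with homogeneous dimension $Q=4$, namely $\|v\|_{L^{4}}\le C\|\nabla_{\mathbb H}v\|_{L^2}$, applied to $v=T_k(u_n)$. This gives
$$k^4\,m(\{|u_n|\ge k\})\le \|T_k(u_n)\|_4^4\le C E_k^2\le Ck^{2/(1+\alpha)}.$$
Hence $m(\{|u_n|\ge k\})\le Ck^{-(4+2\alpha)/(1+\alpha)}$, which is a uniform Marcinkiewicz bound.

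The main obstacle is matching the precise exponent $\frac{4+8\alpha}{3+5\alpha}$ claimed in the statement. It is smaller than what the computation above produces, so I expect the authors estimate more crudely. Possible sources are: bounding the Kirchhoff factor only through $E_k$ together with an interpolation step; working with the gradient rather than $u_n$ itself; or using the Marcinkiewicz property of $|\nabla_{\mathbb H}u_n|$ as an intermediate step, as in the standard $\{|\nabla u_n|>\lambda\}\subset\{|\nabla u_n|>\lambda,|u_n|<k\}\cup\{|u_n|\ge k\}$ splitting with $k$ optimized against $\lambda$. In every case Marcinkiewicz spaces on the bounded domain $\Omega$ are nested, so $\mathcal M^{p}(\Omega)\subset\mathcal M^{q}(\Omega)$ for $q<p$. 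Since
$$\frac{4+2\alpha}{1+\alpha}\ge\frac{4+8\alpha}{3+5\alpha}$$
for $0<\alpha<1$, the bound obtained above already implies boundedness in $\mathcal M^{\frac{4+8\alpha}{3+5\alpha}}(\Omega)$. I would conclude this way, checking the inequality between the exponents explicitly and keeping every constant independent of $n$.
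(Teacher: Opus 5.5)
Your argument is correct for the statement as literally written. There is one arithmetic slip, and it does not affect the conclusion: $k^4m(\{|u_n|\ge k\})\le Ck^{2/(1+\alpha)}$ gives $m(\{|u_n|\ge k\})\le Ck^{-\frac{2+4\alpha}{1+\alpha}}$, not $Ck^{-\frac{4+2\alpha}{1+\alpha}}$. This corrected bound is exactly the paper's \eqref{bbd3}. The comparison $\frac{2+4\alpha}{1+\alpha}>\frac{4+8\alpha}{3+5\alpha}$ still holds, since it is equivalent to $3+5\alpha>2+2\alpha$.

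Where you differ from the paper is in the object being estimated. For the paper, the level-set bound on $u_n$ is only an intermediate step. It then splits $\{|\nabla_{\mathbb H}u_n|\ge t\}\subset\{|\nabla_{\mathbb H}u_n|\ge t,\ |u_n|<k\}\cup\{|u_n|\ge k\}$ and bounds the first set by Chebyshev and the truncated energy. It then takes $k=t^{\frac{2+2\alpha}{3+5\alpha}}$, and that choice is where the exponent $\frac{4+8\alpha}{3+5\alpha}$ comes from.

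So the paper's proof actually gives a Marcinkiewicz bound on $|\nabla_{\mathbb H}u_n|$. That is what Section \ref{s4} uses to obtain weak convergence in $W_0^{1,r}(\Omega)$ for $r<\frac{4+8\alpha}{3+5\alpha}$. Your nesting argument bounds $u_n$ alone, which does not provide this.

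Your own ingredients close the gap in one line. Chebyshev gives $m(\{|\nabla_{\mathbb H}u_n|\ge t,\ |u_n|<k\})\le t^{-2}E_k\le Ct^{-2}k^{1/(1+\alpha)}$. With the paper's $k$, both pieces are then at most $Ct^{-\frac{4+8\alpha}{3+5\alpha}}$ for $t\ge1$. Optimizing $k$ instead gives the better exponent $\frac{4+8\alpha}{3+4\alpha}$.

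In the preliminary steps your route is the more robust one:
\begin{itemize}
\item Using the divergence-free hypothesis on $\vec U$ kills the convection term, so you need no smallness condition such as the paper's $\mu<A/C$.
\item The observation $\|u_n\|_{1,2}^2\ge E_k$ turns \ref{A1} directly into $E_k\le Ck^{1/(1+\alpha)}$. This replaces the paper's detour through a $k$-dependent $W_0^{1,2}$ bound.
\end{itemize}
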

				\begin{proof}
				We recall that a truncation of any measurable function $h$ is defined as 
				\[T_kh(x)=\begin{cases}
					h(x), & \text{if}~|h(x)|\leq k\\
					k, & \text{otherwise}.
				\end{cases}\]
				We shall now test the weak formulation \eqref{WFn} with $\phi=T_k(u_n)$ and use condition \ref{A1} to get the following
				\begin{align}\label{eq1}
					\begin{split}
						A\|u_n\|^{2\alpha}\leq&2^{-1}\left(\int_{\Omega}\mathfrak{m}(\|u_n\|_{1,2}^2)\nabla_{\mathbb{H}}u_n\cdot \nabla_{\mathbb{H}}T_k(u_n) dx\right)\\
					=&\mu\int_{\Omega}u_n\vec{U}\cdot\nabla_{\mathbb{H}}u_ndx+\int_{\Omega}T_k(u_n)  d\nu_n\\
					\leq & \mu\int_{\Omega}u_n\vec{U}\cdot\nabla_{\mathbb{H}}u_ndx+k\|\nu_n\|_1
					\leq  C\mu\|u_n\|_{1,2}+Ck.
					\end{split}
				\end{align}
				Thus we have
				\begin{equation}\label{eq2}
					A\|u_n\|_{1,2}^{2\alpha+2}
					\leq 
					 C\mu\|u_n\|_{1,2}+\lambda k^4m(|\Omega|)+Ck
					\leq  
					C\mu\|u_n\|_{1,2}+Ck.
				\end{equation}				
				This implies that the sequence $(u_n)$ is bounded in $W_0^{1,2}(\Omega),$ provided that $\mu<\frac{A}{C}$. 
				
				However, the bound depends on $k$ and hence it is not   $W_0^{1,2}$-uniformly bounded. Without loss of generality, we may assume that this $k$-dependent bound can be of the order of $k^{4}$. As a result, we have 
				\begin{align}\label{bdd1}
				\begin{split}
				\|u_n\|_{1,2}\leq & Ck^\frac{1}{2+2\alpha}.
				\end{split}
				\end{align}
				therefore we have to prove that 
				$$m(\{x:|\nabla_{\mathbb{H}}u_n|\geq t\})\leq\frac{C}{t^{2}},~\hbox{ for every }
				 t\geq 1,$$
				so that it can be shown to be uniformly bounded in $\mathcal{M}^{2}(\Omega)$.  
				
				We begin to prove this boundedness by noting  
				the following set inclusions:
				\begin{align}\label{set_incl1}
				\begin{split}
				\{x:|\nabla_{\mathbb{H}}u_n|\geq t\}&=	\{x:|\nabla_{\mathbb{H}}u_n|\geq t, |u_n|<k\}\cup	\{x:|\nabla_{\mathbb{H}}u_n|\geq t, |u_n|\geq k\}\\
				&\subset \{x:|\nabla_{\mathbb{H}}u_n|\geq t, |u_n|<k\}\cup\{x:| u_n|\geq k\}.
				\end{split}
				\end{align}
				
				By the subadditivity of the Lebesgue measure we have
					\begin{align}\label{set_incl2}
					\begin{split}
						m(\{x:|\nabla_{\mathbb{H}}u_n|\geq t\})&\leq m(	\{x:|\nabla_{\mathbb{H}}u_n|\geq t, |u_n|<k\})+m(\{x:|u_n|\geq k\}).
					\end{split}
				\end{align}
				
				Let $\mathcal{S}_1:=\{x:|u_n|\geq k\}$. By the embedding result of Burns \cite{burns1} and \eqref{bdd1} we have
				\begin{align}\label{bbd2}
				\begin{split}
				k^2m(\mathcal{S}_1)^{1/2}=\left(\int_{\mathcal{S}_1}k^{4}dx\right)^{2/4}\leq &C\int_{\Omega}|\nabla_{\mathbb{H}}u_n|^2dx\leq Ck^{\frac{1}{1+\alpha}},
				\end{split}
			    \end{align}
			    hence
			    \begin{align}\label{bbd3}
			    	\begin{split}
			    		m(\{x:|u_n|\geq k\})&\leq \frac{C}{k^{\frac{2+4\alpha}{1+\alpha}}}.
			    	\end{split}
			    \end{align}
			    
			    Next, consider the following for $t>1$:
			    \begin{align}\label{bbd4}
			    	\begin{split}
			    		m(\{x:|\nabla_{\mathbb{H}}u_n|\geq t, |u_n|< k\})&\leq \frac{1}{t^4}\int_{\Omega}\mathfrak{m}(\|u_n\|_{1,2}^2)|\nabla_{\mathbb{H}}u_n|^2dx\\
			    		&\leq \frac{1}{t^2}\int_{\Omega}\mathfrak{m}(\|u_n\|_{1,2}^2)|\nabla_{\mathbb{H}}u_n|^2dx\\
			    		&\leq\frac{Ck}{t^2}.
			    	\end{split}
			    \end{align}
			    On combining \eqref{bbd3} and \eqref{bbd4}, we obtain
			    \begin{align}\label{bbd5}
			    	\begin{split}
			    		m(\{x:|\nabla_{\mathbb{H}}u_n|\geq t\})&\leq \frac{Ck}{t^2},
			    	\end{split}
			    \end{align}
			    and choosing
			     $k=t^{\frac{2+2\alpha}{3+5\alpha}},$ 
			     we arrive at
			    \begin{align}\label{bbd6}
			    	\begin{split}
			    			m(\{x:|\nabla_{\mathbb{H}}u_n|\geq t\})&\leq \frac{C}{t^{\frac{4+8\alpha}{3+5\alpha}}},
			    	\end{split}
			    \end{align}
			    therefore, $(u_n)$ is indeed bounded in $\mathcal{M}^{{\frac{4+8\alpha}{3+5\alpha}}}(\Omega)$.
			    
			    This completes the proof of Lemma \ref{bdd_Marcin_sp}.
				\end{proof}
				\begin{remark}\label{lim1}
				In the limiting case, one can observe that as $\alpha\to 0^+$, the sequence is bounded in $\mathcal{M}^{4/3}(\Omega),$ which is actually equal to $\frac{Q}{Q-1}$. In the current case, $Q=4$.
				\end{remark}				
				\section{Proof of the Main Result 
				}\label{s4}
				
				We are now in a position to prove the main result stated in Theorem \ref{existence}.				
				We have already assumed that $\nu_n\rightharpoonup \nu$ in $\mathfrak{M}(\Omega)$. Since $L^r(\Omega)\hookrightarrow \mathcal{M}^r(\Omega)\hookrightarrow L^{r-\epsilon}(\Omega)$, we
				can
				 conclude that $(u_n)$ weakly converges to some $u$ in $W_0^{1,r}(\Omega),$ for $1\leq r<\frac{4+8\alpha}{3+5\alpha}$. 
				
				 We now claim that $\mathfrak{m}(\|u_n\|_{1,2}^2)$ converges to a finite number. 
				 For if not, then either (a)~it does not converge or (b)~there exists a subsequence such that $\mathfrak{m}(\|u_n\|_{1,2}^2)\to\infty,$ as $n\to\infty$. 
				
				In the case of (a), by the Bolzano-Weierstrass theorem, there exists a subsequence of $\mathfrak{m}(\|u_n\|_{1,2}^2)$ that converges to a finite number. 
				
				In the case of (b) however, this is a contradiction,  since then
				 the left hand side of  \eqref{defn_WS} yields $\infty$ and the right hand side of it is finite. 
				
				Therefore by \ref{A1}, there exists $C>0$ such that $\|u_n\|_{1,2}\to C^{-1}l^{\frac{1}{2\alpha}}$. This $l$ depends on the choice of $(\nu_n)$. Thus there exists a subsequence of $(u_n)$ such that $u_n\rightharpoonup u$ in $\mathcal{X}$. Thus, as $n\to\infty$ in \eqref{WFn}, we have, by the weak lower semicontinuity of $\|\cdot\|_{1,2}$-norm, the following
				\begin{align}\label{eq1'}
				\begin{split}
				\mathfrak{m}(\|u\|_{1,2}^2)\int_{\Omega}\nabla_{\mathbb{H}}u\cdot\nabla\varphi dx&+\mu\int_{\Omega}\phi\vec{U}\cdot\nabla_{\mathbb{H}}udx\\
				&\leq\int_{\Omega}\varphi d\nu,~\hbox {for every }
				\varphi\geq 0\in C_{c}^{\infty}(\Omega),
				\end{split}
				\end{align}	
				therefore $u$ is a {\it subsolution} to \eqref{main}. 
				
				Moreover, by Proposition \ref{s_to_1} we have $u_n\to u$ in $\mathcal{X}$ since the number of modes are finite ($\leq M$). Thus $u$ is indeed a weak solution of problem \eqref{main}.
				

				\begin{remark}\label{OHM}
				The obtained solutions are all nontrivial since we have considered a nonhomogeneous measure data problem.
				\end{remark}
				
				In order to complete the proof of the main result it now suffices to verify  the following result.
				\begin{proposition}\label{dual_weak}
				The following two statements are equivalent:
				\begin{enumerate}[label=\roman*]
				\item $u$ is a weak solution of problem \eqref{main};
				\item $u$ is a duality solution of problem \eqref{main}.
				\end{enumerate}
				\end{proposition}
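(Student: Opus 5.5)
The plan is to prove the equivalence by a duality (transposition) argument. The Kirchhoff coefficient is frozen at the constant $\lambda:=\mathfrak{m}(\|u\|_{1,2}^2)$, which is positive by \ref{A1} whenever $u\neq 0$; Remark \ref{OHM} gives nontriviality. With $\lambda$ frozen, problem \eqref{main} becomes linear in $u$. I read the adjoint problem \eqref{meas_prob2} as the operator formally adjoint to $u\mapsto-\lambda\mathcal{L}_{\mathbb{H}}u+\mu\vec{U}\cdot\nabla_{\mathbb{H}}u$. Since $\vec{U}$ is horizontal and divergence-free, integration by parts gives
\[
\int_\Omega \varphi\,\vec{U}\cdot\nabla_{\mathbb{H}}u\,dx=-\int_\Omega u\,\vec{U}\cdot\nabla_{\mathbb{H}}\varphi\,dx .
\]
So the adjoint is $-\lambda\mathcal{L}_{\mathbb{H}}v-\mu\vec{U}\cdot\nabla_{\mathbb{H}}v=g$, up to the sign convention in \eqref{meas_prob2}. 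I also read the measure $d\mu$ in \eqref{defn1} as $d\nu$. For each $g\in C_c^\infty(\Omega)$ the adjoint problem has a solution $v_g\in C^{1,\gamma}_{\rm loc}\cap C_c(\bar\Omega)$ by Remark \ref{reg1}. It is unique because the operator is coercive on $W_0^{1,2}(\Omega)$: the convective term integrates to zero against $v$ by the divergence-free condition.

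\textbf{(i)$\Rightarrow$(ii).} Let $u\in\mathcal{X}$ be a weak solution. I would extend \eqref{defn_WS} from $C_c^\infty(\Omega)$ to test functions in $W_0^{1,2}(\Omega)\cap C_0(\bar\Omega)$. The approximants are mollifications $\varphi_j\to v_g$, converging in $W_0^{1,2}$ and uniformly. The left side of \eqref{defn_WS} passes to the limit because $u\in W_0^{1,2}$ and $\vec U$ is bounded. The right side passes to the limit because $\nu$ is a finite Radon measure and the convergence is uniform. Taking $\varphi=v_g$ and moving the derivatives onto $v_g$ (symmetry of the Dirichlet form plus the identity above) gives
\[
\int_\Omega u\,g\,dx=\lambda\int_\Omega\nabla_{\mathbb{H}}u\cdot\nabla_{\mathbb{H}}v_g\,dx+\mu\int_\Omega v_g\vec U\cdot\nabla_{\mathbb{H}}u\,dx=\int_\Omega v_g\,d\nu .
\]
This is \eqref{defn1}, with $u$ extended by zero outside $\Omega$.

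\textbf{(ii)$\Rightarrow$(i).} Let $u\in\mathcal{X}$ satisfy \eqref{defn1}. Given $\varphi\in C_c^\infty(\Omega)$, set $g:=-\lambda\mathcal{L}_{\mathbb{H}}\varphi-\mu\vec U\cdot\nabla_{\mathbb{H}}\varphi$. By uniqueness for the adjoint problem, $v_g=\varphi$. Then \eqref{defn1} gives $\int_\Omega ug\,dx=\int_\Omega\varphi\,d\nu$. Integrating by parts on the left, which is legitimate since $u\in W_0^{1,2}$ and $\varphi$ is smooth, recovers exactly \eqref{defn_WS}.

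One gap is that this $g$ is only $C_c^{1}$ when $\vec U$ is merely bounded, not $C_c^\infty$. I would handle it by density: approximate $g$ in $L^2$ by $g_j\in C_c^\infty(\Omega)$. Then $v_{g_j}\to v_g$ in $W_0^{1,2}$ by the energy estimate, and uniformly by the $L^\infty$-estimate behind Remark \ref{reg1}, so both sides of \eqref{defn1} pass to the limit.

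\textbf{Main obstacle.} The hardest step will be justifying $v_g$ as a test function against the measure $\nu$. This needs $v_g$ continuous up to $\partial\Omega$, even though the boundary is only Lipschitz. I would rely on $v_g\in C_c(\bar\Omega)$ from Remark \ref{reg1}, together with uniform convergence of the mollified approximants. A second, subtler point is that $\lambda$ depends on $u$ itself. This causes no difficulty, since in both directions $u$ is fixed first and $\lambda$ is then treated as a constant.
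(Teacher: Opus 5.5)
Your argument is correct, provided the adjoint problem is read as $-\lambda\mathcal{L}_{\mathbb{H}}v-\mu\vec{U}\cdot\nabla_{\mathbb{H}}v=g$. That is the reading the paper's own computation \eqref{int_bp1} tacitly needs.

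Your (i)$\Rightarrow$(ii) is the paper's argument (test with $v$, integrate by parts), with the missing justification supplied. Your converse takes a genuinely different route.

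\textbf{The two converses.} The paper takes some weak solution $\bar u$, notes that it is also a duality solution, and concludes $u=\bar u$ from $\int_\Omega(u-\bar u)g\,dx=0$. You argue by transposition instead. You take $g=-\lambda\mathcal{L}_{\mathbb{H}}\varphi-\mu\vec{U}\cdot\nabla_{\mathbb{H}}\varphi$ and identify $v_g=\varphi$ through uniqueness for the coercive linear adjoint problem.

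\textbf{What your route buys.} It needs neither the existence of a weak solution nor any uniqueness for the nonlinear problem. It also avoids the weak point of the paper's converse. The paper uses one and the same $v$ for $u$ and $\bar u$, although their adjoint problems carry different coefficients $\mathfrak{m}(\|u\|_{1,2}^2)$ and $\mathfrak{m}(\|\bar u\|_{1,2}^2)$. If that step were valid, it would force uniqueness of weak solutions, which is in tension with Conjecture~\ref{multiplicity}.

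\textbf{What your route cannot deliver.} The paper's route is meant to give $u\in\mathcal{X}$; yours does not. Definition~\ref{duality_soln} imposes neither $u\in L^\infty(\Omega)$ nor $\mathcal{N}(|\nabla_{\mathbb{H}}u|^2)\le M$, so you have to assume $u\in\mathcal{X}$. You must also assume $u\neq 0$ so that $\lambda>0$; Remark~\ref{OHM} concerns weak solutions, not duality solutions. In effect, you prove the equivalence within the admissible class $\mathcal{X}$.

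\textbf{Two small repairs.}
\begin{enumerate}
\item In the converse, when $\vec U$ is only bounded, $g$ is merely in $L^\infty$ with compact support, not $C^1_c$. Since $Q=4$, the $L^\infty$ bound for the adjoint problem requires data in $L^q$ with $q>Q/2=2$. So approximate $g$ by mollification in such an $L^q$, not in $L^2$. This works because $g\in L^\infty$ has compact support.
\item In (i)$\Rightarrow$(ii), cut $v_g$ off before mollifying, e.g.\ to $(v_g-\epsilon)^+-(v_g+\epsilon)^-$. This function has compact support in $\Omega$ and converges to $v_g$ uniformly and in $W_0^{1,2}(\Omega)$. Only after this step do the mollified approximants actually lie in $C_c^\infty(\Omega)$.
\end{enumerate}
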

				\begin{proof}
				
				Let $u$ be a weak solution of problem \eqref{main}. Then we have 
				\begin{align}\label{defn_WS1}
					\begin{split}
						\int_{\Omega}\mathfrak{m}(\|u\|_{1,2}^2)\nabla_{\mathbb{H}}u\cdot \nabla_{\mathbb{H}}\varphi dx&+\mu\int_{\Omega}\varphi\vec{U}\cdot\nabla_{\mathbb{H}}udx\\
						&=\int_{\Omega}\varphi d\nu,~\hbox{ for every }
						\varphi\in C_{c}^{\infty}(\Omega).
					\end{split}
				\end{align}
				
				Choose any $g\in C_{c}^{\infty}(\Omega)$ and let $v$ be the corresponding solution of problem  \eqref{meas_prob2}. Then by integration by parts, we get 
				\begin{align}\label{int_bp1}
				\begin{split}
				\int_{\Omega}ugdx=
				\int_{\Omega}\mathfrak{m}(\|u\|_{1,2}^2)\nabla_{\mathbb{H}}u\cdot \nabla_{\mathbb{H}}v dx
				&+\mu\int_{\Omega}v\vec{U}\cdot\nabla_{\mathbb{H}}udx\\
				&=\int_{\Omega}v d\nu,
				\end{split}
			    \end{align}
			    which is well-defined by Remark \ref{reg1}. Thus $u$ is also a duality solution of problem \eqref{main}. 
			    
			    Conversely, let $u$ be a duality solution 
			    of problem \eqref{main}
			    and let $\bar{u}$ be a weak solution
			    of problem \eqref{main}. 
			    Then by the above argument, $\bar{u}$ is also a duality solution   of problem \eqref{main}. Hence we have
			    \begin{align}\label{int_bp2}
			    	\begin{split}
			    		\int_{\Omega}ugdx=\int_{\Omega}vd\nu=\int_{\Omega}\bar{u}gdx,
			    		 \hbox{ for every }
			    		 g\in C_{c}^{\infty}(\Omega),
			    		\end{split}
			    		\end{align}
			    		thus
			    		 $$\int_{\Omega}(u-\bar{u})gdx=0, \hbox{ for every }
			    		 g\in C_{c}^{\infty}(\Omega),$$
			    hence $u=\bar{u}$ a.e. in $\Omega$.
			    
			    This completes the proof of Proposition \ref{dual_weak}
			    and also the proof of Theorem \ref{existence}.
				\end{proof}	
				
				\section{Epilogue}\label{s5}
				\begin{remark}
			    One may note that $u$ is always a subsolution
			     of problem \eqref{main} 
			     even if the solution space is $W_0^{1,2}(\Omega)$ and not $\mathcal{X}$.
				\end{remark}
				
				The existence of a weak solution 
				of problem \eqref{main}
				 gives rise to a natural question on the multiplicity of  solutions
				 of problem \eqref{main}. 
				 One may note again that not having an energy functional is a huge drawback to come to any such conclusion. Towards this, we propose the following conjecture.
				\begin{conjecture}\label{multiplicity}
				For each $M$ in $\mathbb{N}_0$, there exists at least one distinct solution $u_M$ (having atmost $M$ modes) of problem \eqref{main}. 
				\end{conjecture}
				
				We next prove an \textcolor{blue}{auxiliary} result that can be used to pass the limit inside the Kirchhoff function.
				\begin{proposition}\label{s_to_1}
				If $(\nabla_{\mathbb{H}}f_n)$ converges weakly to $\nabla_{\mathbb{H}}f$ in $L^2(\Omega)$ and the number of modes for the sequence $(|\nabla_{\mathbb{H}} f_n|)$ is uniformly bounded, then $|\nabla_{\mathbb{H}}f_n|\to |\nabla_{\mathbb{H}}f_n|$ in $L^2(\Omega)$.
				\end{proposition}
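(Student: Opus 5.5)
The statement presumably intends the conclusion $|\nabla_{\mathbb{H}}f_n|\to|\nabla_{\mathbb{H}}f|$ in $L^2(\Omega)$, and I would prove that. The plan rests on the Hilbert-space fact that weak convergence plus convergence of norms gives strong convergence. Set $g_n:=|\nabla_{\mathbb{H}}f_n|$ and $g:=|\nabla_{\mathbb{H}}f|$. Since $\nabla_{\mathbb{H}}f_n\rightharpoonup\nabla_{\mathbb{H}}f$ in $L^2(\Omega)$, the sequence is bounded in $L^2$, and weak lower semicontinuity gives
\begin{equation*}
\|g\|_{2}\leq\liminf_{n\to\infty}\|g_n\|_2 .
\end{equation*}
So it suffices to prove the reverse inequality $\limsup_n\|g_n\|_2\leq\|g\|_2$. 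That would give $\nabla_{\mathbb{H}}f_n\to\nabla_{\mathbb{H}}f$ strongly in $L^2(\Omega;\mathbb{R}^2)$, and hence $g_n\to g$ in $L^2(\Omega)$ by the reverse triangle inequality $\big||a|-|b|\big|\leq|a-b|$.

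For the upper bound I would split the possible loss of norm into oscillation and concentration. First, oscillation. I would use the hypothesis $\mathcal{N}(g_n^2)\leq M$ to decompose $\Omega$ into at most $M$ regions, each containing a single peak of $g_n^2$, on which $g_n^2$ is monotone along rays from the peak. Monotone profiles with bounded total mass are compact in $L^1_{\text{loc}}$ away from the peaks, by a Helly-type selection argument. So, after passing to a subsequence and letting the peak locations converge, I expect $g_n\to\tilde g$ a.e. outside a finite set $\{x_1,\dots,x_M\}$. Combined with the weak convergence, this should identify $\tilde g=g$ a.e.; this is where a Walker-type result \cite[Theorem 4.2]{walker1} would be invoked.

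Second, concentration, which I expect to be the main obstacle. A.e. convergence only gives strong $L^2$ convergence if $(g_n^2)$ is equi-integrable (Vitali). Bounded modes does \emph{not} exclude concentration of mass at one of the limiting peaks $x_i$: a single bump of height $n$ supported on a set of measure $n^{-2}$ has one mode, is bounded in $L^2$, and tends weakly to $0$ without converging in norm. In that scenario $g_n^2\,dx\rightharpoonup g^2\,dx+\sum_i c_i\delta_{x_i}$, and the claim fails unless every $c_i=0$.

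So at this step I would first try to exploit the specific context in which Proposition~\ref{s_to_1} is applied: the $f_n=u_n$ solve \eqref{Pn}. Testing \eqref{WFn} with $\phi=\psi^2 T_k(u_n)$ for cutoffs $\psi$ localized near $x_i$, and using the Marcinkiewicz bound of Lemma~\ref{bdd_Marcin_sp}, I would try to show that $\int_{B(x_i,\rho)}g_n^2\,dx\to0$ uniformly in $n$ as $\rho\to0$. That would force $c_i=0$. If this localized estimate cannot be closed, the hypothesis ``no concentration'' (equi-integrability of $(|\nabla_{\mathbb{H}}f_n|^2)$) must be added to the statement, since it cannot be derived from the mode bound alone. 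With $c_i=0$, Vitali's theorem yields $\|g_n\|_2\to\|g\|_2$, and the argument concludes as in the first paragraph.
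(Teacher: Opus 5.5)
Your concentration diagnosis is right, and it does more than expose a gap in your argument: it refutes the proposition. (Read it, as you do, with $|\nabla_{\mathbb{H}}f|$ on the right; as literally written the statement is a tautology.) Use the dilations $\delta_\lambda(x,y,z)=(\lambda x,\lambda y,\lambda^2 z)$, a suitable $\phi\in C_c^\infty(\Omega)$ and $\lambda_n\to\infty$, and set $f_n=\lambda_n\,\phi\circ\delta_{\lambda_n}$. These $f_n$ lie in $C_c^\infty(\Omega)$ and satisfy $\|\nabla_{\mathbb{H}}f_n\|_2=\|\nabla_{\mathbb{H}}\phi\|_2$ (this uses $Q=4$) and $\nabla_{\mathbb{H}}f_n\rightharpoonup 0$. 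Moreover, the number of modes of $|\nabla_{\mathbb{H}}f_n|^2$ does not depend on $n$. So no proof exists, and yours, which ends by conditioning on an extra hypothesis, does not supply one. The paper's proof follows the same skeleton as your plan: compactness of $|\nabla_{\mathbb{H}}f_n|$ from the mode bound (via BV instead of Helly), identification of the limit with $|\nabla_{\mathbb{H}}f|$, then Vitali. It simply asserts the uniform integrability that you correctly flag as missing. Your fallback of extracting non-concentration from \eqref{WFn} cannot work either. Lemma~\ref{bdd_Marcin_sp} controls $\nabla_{\mathbb{H}}u_n$ only in $\mathcal{M}^p$ with $p=\frac{4+8\alpha}{3+5\alpha}<2$, which says nothing about equi-integrability of $|\nabla_{\mathbb{H}}u_n|^2$. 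Worse, suppose $\nu\notin W^{-1,2}(\Omega)$, e.g.\ a Dirac mass (possible since $Q>2$). Testing \eqref{WFn} shows that any subsequence with $\|u_n\|_{1,2}$ bounded would make $(\nu_n)$ bounded in $W^{-1,2}(\Omega)$, hence $\nu\in W^{-1,2}(\Omega)$. So $\|u_n\|_{1,2}\to\infty$, and the hypothesis of the proposition fails exactly where it is applied.

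Your reduction to $\limsup_n\|g_n\|_2\le\|g\|_2$ is fine. The oscillation step, however, which you treat as unproblematic, is also false; it is where the paper's BV claim and its ``uniqueness of the limit'' break down too. In dimension three, a bound on the number of local extrema of $g_n^2$ gives neither monotonicity along rays nor any Helly-type compactness. Even if $g_n\to\tilde g$ a.e., weak convergence of $\nabla_{\mathbb{H}}f_n$ yields only $|\nabla_{\mathbb{H}}f|\le\tilde g$, because the modulus is convex but not weakly continuous. For a concrete example, take $f_n(x,y,z)=F(y)+n^{-1}\sin(nx)$ with $F'(y)=\sqrt{2+y}$ (note $|y|<1$ on $\Omega$). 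Since $f_n$ does not depend on $z$, $\nabla_{\mathbb{H}}f_n=(\cos(nx),\sqrt{2+y})\rightharpoonup(0,\sqrt{2+y})=\nabla_{\mathbb{H}}f$ with $f=F(y)$, while
\[
|\nabla_{\mathbb{H}}f_n|^2=\cos^2(nx)+2+y .
\]
This function has $y$-derivative $1$, so it has no critical points and no local extrema in $\Omega$ at all. It is bounded by $4$, so it is equi-integrable. Yet no subsequence converges a.e., $\int_\Omega\bigl|X_1|\nabla_{\mathbb{H}}f_n|\bigr|\,dx\sim n$, and $|\nabla_{\mathbb{H}}f_n|^2\rightharpoonup|\nabla_{\mathbb{H}}f|^2+\tfrac12$. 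So the conclusion fails with no concentration whatsoever, and adding equi-integrability, your proposed repair, does not rescue the statement. What is true is Vitali's theorem under a.e.\ convergence of the vectors $\nabla_{\mathbb{H}}f_n$ (not of their moduli) plus equi-integrability of $|\nabla_{\mathbb{H}}f_n|^2$; there the mode hypothesis plays no role. In the PDE setting, strong convergence, when it holds, comes from testing \eqref{WFn} with $u_n-u$ (the $(S_+)$ property, which requires $\nu_n\to\nu$ in $W^{-1,2}(\Omega)$), not from counting modes.
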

				\begin{proof}
					We begin by observing that the sequence $(|\nabla_{\mathbb{H}}f_n|)$ having a bounded number of modes, say $M$, is a sequence which is bounded in the space $\text{BV}(\Omega)$. By the Remark \ref{BV_Compact} since $\text{BV}(\Omega)$ is compactly embedded in $L^1(\Omega)$, hence $|\nabla_{\mathbb{H}}f_n|\to g$ in $L^1(\Omega)$. Uniqueness of the limit implies that $g=|\nabla_{\mathbb{H}}f|$. Thus  there exists a subsequence, still denoted as $(|\nabla_{\mathbb{H}}f_n|)$, such that $|\nabla_{\mathbb{H}}f_n|\to|\nabla_{\mathbb{H}}f|$ in measure. Thus by the Vitali's convergence theorem, a sequence converging in measure and being \textcolor{blue}{uniformly} integrable implies that it converges strongly, i.e., $$\int_{\Omega}|\nabla_{\mathbb{H}}(f_n-f)|^2dx\to 0~\text{as}~n\to\infty.$$
				\end{proof}				
				\begin{remark}\label{zeidler}
			For a reference to monotone operators see Zeidler \cite{Zeidler}.
				\end{remark}
				We conclude the paper by presenting
				an interesting application of  Theorem \ref{existence}.
				
				\begin{example}
				Consider the following problem in the Euclidean setup:
				\begin{equation}
				\left\{\begin{aligned}\label{example_prob}
				-\Delta u+\mu\vec{U}\cdot\nabla u=&h~\text{ \textcolor{blue}{in} }\Omega\\
				u=&0~\text{ on }\partial\Omega.
			    \end{aligned}\right.
			    \end{equation}
				This is the well-known {\it diffusion-reaction equation} with an external force $h\in L^2(\Omega)$. The applicability of this equation is far-reaching and hence is important to study this equation and its associated solution(s). Here $\mathfrak{m}(\cdot)=1$ and $\nu=h/2$. The existence of at least one solution of
				problem \eqref{example_prob} can be established by invoking our
				 Theorem \ref{existence}. 
				 \end{example}
				
				\begin{remark}\label{appl1}
					{A natural generalization to this problem which the readers may consider would be the parabolic version of this problem. Although we leave this problem open, however the readers may refer to the work due to Zuo et al. \cite{zuo2} for the techniques used. }
				\end{remark}
				
		
		
		
\subsection*{Acknowledgements}
		Choudhuri thanks Prof. D.D.Repov\v{s}, University of Ljubljana and at the Institute of Mathematics, Physics and Mechanics in Ljubljana, for many useful discussions and suggestions on this work. Choudhuri also thanks the Department of Mathematics,  National Institute of Technology Rourkela, India since part of this research was carried out by the author during his visit. 
		
		\subsection*{Funding}
		D. Choudhuri was supported by the N.B.H.M., Department of Atomic Energy (DAE) India, [02011/47/2021/NBHM(R.P.)/R \& D II/2615]. O.H.Miyagaki  was partially supported by CNPQ/Brazil Proc. 303256/2022-2 and FAPESP/Brazil Proc 2022/16407-1. 
		
		\subsection*{Data Availability} Data sharing not applicable to this article as no datasets were generated or analysed during the current study.
		\subsection*{Competing Interests}
		The authors hereby state that there are no conflicts of interest regarding the presented results.

\end{document}